\numberwithin{equation}{section}
\newtheorem{theorem}{Theorem}[section]
\newtheorem*{theorem*}{Theorem}
\newtheorem{lemma}[theorem]{Lemma}
\newtheorem*{lemma*}{Lemma}
\theoremstyle{definition}
\newtheorem{remark}[theorem]{Remark}
\newcommand{\RN}[1]{%
  \textup{\uppercase\expandafter{\romannumeral#1}}%
}
\begin{document}

\title{Second order estimates for a free boundary phase transition}

\author{Jingeon An}

\keywords{Allen-Cahn equation, Free boundary}

\subjclass[2020]{35R35, 35N25}

\address{Department of Mathematics and Computer Science, University of Basel, Spiegelgasse 1, 4052 Basel, Switzerland}

\email{jingeon.an@icloud.com}

\begin{abstract}
    It is well known that minimizers of the Allen-Cahn-type functional
    \[
    J_\epsilon(u):=\int_\Omega\frac{\epsilon|\nabla u|^2}{2}+\frac{W(u)}{\epsilon},
    \]
    where $W$ is a double-well potential, resemble minimal surfaces in the sense that their level sets converge to a minimal surface as $\epsilon\rightarrow 0$. In this work, we consider the indicator potential $W(\tau)=\chi_{(-1,1)}(\tau)$, which leads to the Bernoulli-type free-boundary problem
    \[
        \left\{
            \begin{alignedat}{2}
                \Delta u&=0&\quad&\textrm{in}\quad\{|u|<1\}\\
                |\nabla u|&=\epsilon^{-1}&\quad&\textrm{on}\quad\partial \{|u|<1\}.
            \end{alignedat}
        \right.
    \]
    We provide a short proof that the transition layers are uniformly $C^{2,\alpha}$ regular, up to the free boundary. In addition to the uniform $C^{2,\alpha}$ estimate, we also obtain improved $C^\alpha$ mean curvature bound that decays in an algebraic rate of $\epsilon$, which confirms the convergence of interfaces to the minimal surface in a very strong sense. We present a simple elliptic equation
    \[
        \Delta\phi=H^2-|\mathbf{A}|^2
    \]
    where $\phi=\log(1/|\nabla u|)$ is the log-gradient of $u$, $H$ and $\mathbf{A}$ are the mean curvature and the second fundamental form of level surfaces, respectively. From this, the uniform estimates readily follow. The whole argument is performed in a general Riemannian manifold setting.
\end{abstract}

\maketitle

\section{Introduction}

In this work, we study the free boundary Allen-Cahn equation, derived from the energy functional
\begin{equation}\label{eq:landau-ginzburg}
    J_\epsilon(v):=\int_{\Omega}\frac{\epsilon|\nabla v|^2}{2}+\frac{W(v)}{\epsilon},
\end{equation}
where $\epsilon>0$ is a small parameter, and $W(t)$ is an indicator potential $\chi_{(-1,1)}(t)$ (see Figure \ref{fig:description of potentials}). This functional arises naturally in models of phase transitions---such as those describing the concentration in a binary alloy \cite{allen1972ground,allen1973correction}---where $v$ represents a physical state and $W(v)$ represents its associated potential energy.

\begin{figure}[t]
\includegraphics[width=10cm]{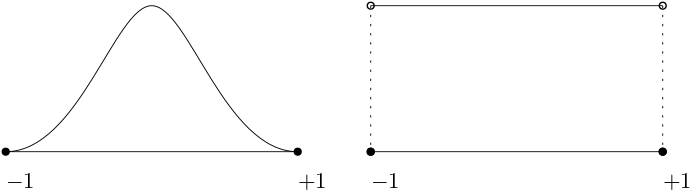}
\centering
\caption{Description of potential $W$. The left side depicts the potential for the traditional Allen-Cahn model, given by $W(t):=(1-t^2)^2/4$. The right illustrates the free boundary version, defined as $W(t)=\chi_{(-1,1)}(t)$.}
\label{fig:description of potentials}
\end{figure}

Since $W(v)$ attains its minimum at $v=\pm 1$, the system energetically favors these states. In the absence of the gradient term $|\nabla u|^2$, any function attaining $\pm 1$ almost everywhere (for example $u=\chi_E-\chi_E^c$ for any measurable set $E$) would minimize the energy functional. However, the gradient term penalizes discontinuities and forces the transition regions to behave like interfaces with a nontrivial surface energy in the minimization of $J_\epsilon$. 

In this sense, the minimization problem of \eqref{eq:landau-ginzburg} is closely related to the theory of minimal surfaces. Consider a minimizer $u_\epsilon$ of $J_\epsilon$ for $\epsilon>0$. Modica's classical result \cite{modica1987gradient} shows that, in the limit $\epsilon\rightarrow 0$, converging subsequences of $(u_\epsilon)_{\epsilon>0}$ in $L^1_\mathrm{loc}$ have their limit 
\[
u=\chi_A-\chi_{A^c},
\]
where $A$ is a set of minimal perimeter in $\Omega$ (i.e., $\partial A\cap\Omega$ is a minimizing minimal surface). 

This connection between phase transition models and minimal surfaces is epitomized by the celebrated Allen-Cahn equation. Namely, the Euler-Lagrange equation of the classical Allen-Cahn energy
\[
J(v)=\int_{\Omega}\frac{|\nabla v|^2}{2}+\frac{(1-v^2)^2}{4},
\]
\begin{equation}\label{eq: Allen-Cahn}
    \Delta u=u^3-u,\,|u|\leq1,\quad\text{ in }\quad\Omega.
\end{equation}

De Giorgi \cite{de1979convergence} proposed in 1978 the following conjecture, known as De Giorgi's conjecture, or $\epsilon$-Bernstein problem: for $n\leq 8$, any global monotone (in one direction) solution of \eqref{eq: Allen-Cahn} has level sets that are hyperplanes. De Giorgi's conjecture is stated only for lower dimensions $n\leq 8$, because for $n\geq 9$, there exists a non-planar minimal surface based on the Simons cone 
\[
\{x_1^2+x_2^2+x_3^2+x_4^2=x_5^2+x_6^2+x_7^2+x_8^2\}\subset\mathbb{R}^8,
\]
due to Del Pino, Kowalczyk, and Wei \cite{del2008counterexample}.

De Giorgi’s conjecture has attracted considerable attention, and numerous partial results have been established. In dimension two and three, the problem is very well understood. In this setting, it has been shown that global monotone solutions are indeed one-dimensional (see \cite{ghoussoub1998conjecture} for $n=2$, and \cite{ambrosio2000entire} for $n=3$). For $n\geq 4$, the deep result of Savin \cite{savin2009regularity} showed that for $4\leq n\leq 8$, the conjecture holds true under the additional limit condition 
\[
    u(x_1,\cdots,x_n)\rightarrow\pm 1,\quad\textrm{as}\quad x_n\rightarrow\pm\infty.
\]
For more comprehensive introduction on Allen-Cahn equation and De Giorgi's conjecture, interested readers may visit the survey of Savin \cite{savin2010phase,savin2010minimal}, and Chan and Wei \cite{chan2018giorgi}.

Similar to the classical Allen-Cahn equation \eqref{eq: Allen-Cahn}, we may consider parametrized potentials $W_\delta(\tau):=(1-\tau^2)^{\delta}$, already introduced by Caffarelli and Cordoba \cite{caffarelli1995uniform}. In this setting, $\delta=2$ becomes the classical Allen-Cahn potential, and cases $\delta\in [0,2)$ were studied, for example, in \cite{du2022four,kamburov2013free,liu2018free,valdinoci2004plane,valdinoci2006flatness,wang2015structure}. By letting $\delta\rightarrow 0$, we obtain the indicator potential $\chi_{(-1,1)}(\tau)$. By perturbing the energy functional 
\[
    J_\epsilon(v):=\int_{\Omega}\frac{\epsilon|\nabla v|^2}{2}+\frac{\chi_{(-1,1)(v)}}{\epsilon},
\]
any critical point $u_\epsilon$ of $J_\epsilon$ satisfies the following free boundary problem in the viscosity sense (see \cite{de2009existence} and cf. \eqref{eq: Allen-Cahn}): 
\begin{equation}\label{epsilon Savin-Kamburov equation}
    \left\{
        \begin{alignedat}{2}
            \Delta u_\epsilon&=0&\quad&\text{in}\quad\{|u_\epsilon|<1\}\\
            |\nabla u_\epsilon|&=1/\epsilon&\quad&\text{on}\quad\partial\{|u_\epsilon|<1\}.
        \end{alignedat}
    \right.
\end{equation}
To provide a geometric illustration of the problem, consider a band with a width comparable to $\epsilon$ (see Figure \ref{fig:Representational image of u_epsilon}). This band is composed of transition layers of $u_\epsilon$, i.e., $\{|u_\epsilon|<1\}$, where $u_\epsilon$ is harmonic. 

\begin{figure}
    \centering
    \includegraphics[width=9cm]{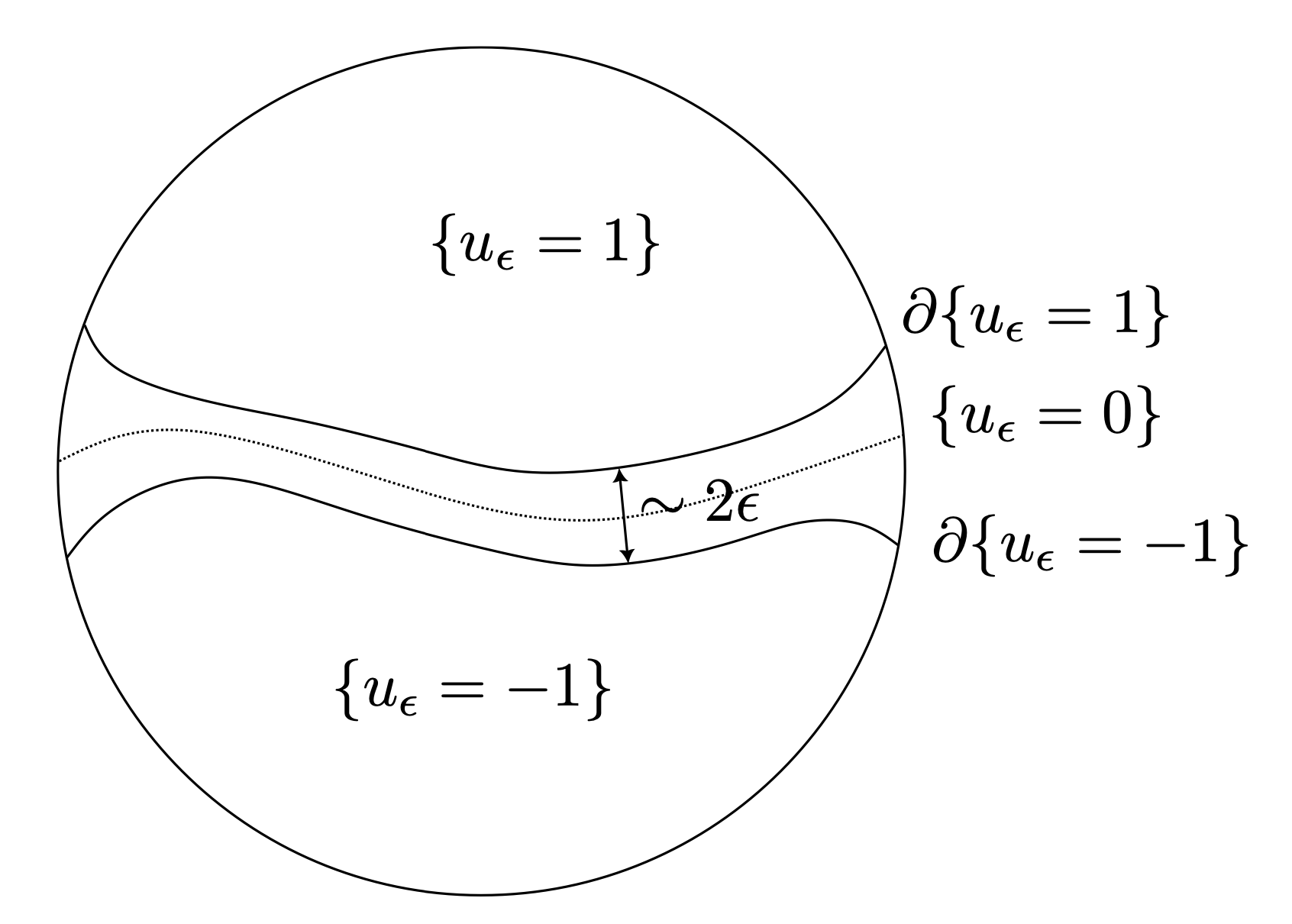}
    \caption{Representational image of $u_\epsilon$}
    \label{fig:Representational image of u_epsilon}
\end{figure}
Mathematically speaking, the free boundary Allen-Cahn, \eqref{epsilon Savin-Kamburov equation}, enjoys a simpler structure than that of classical Allen-Cahn. The ambient function $u_\epsilon$ is harmonic in the transition layers, and all the nonlinearities are ``concentrated" as the free boundary condition $|\nabla u_\epsilon|=1/\epsilon$. It also enjoys the property that separate interfaces (that is to say, the interfaces that are separated by free boundaries) do not interact with each other, unlike the classical Allen-Cahn equation. Therefore, a free boundary Allen-Cahn model can also serve as a natural framework for approximating minimal surfaces, that may be more tractable, in some cases, than the classical Allen-Cahn equation. This motivates the study of properties of free boundary Allen-Cahn equation.

Almost all of the progress made for the classical Allen-Cahn equation has also been made for its free boundary version. In the context of Alt-Caffarelli free boundary problem (introduced by Alt-Caffarelli in 1981 \cite{Alt1981}), defined by
\[
    \left\{
        \begin{alignedat}{2}
            u&\geq 0&\quad&\textrm{in}\quad \{u>0\}\\
            \Delta u&=0&\quad&\textrm{in}\quad \{u>0\}\\
            |\nabla u|&=1&\quad&\textrm{on}\quad\partial\{u>0\},
        \end{alignedat}
    \right.
\]
Caffarelli, Jerison, and Kenig \cite{caffarelli2004global} showed that, in three dimensions, global minimizers of corresponding energy are planar. Jerison and Savin \cite{jerison2015some} further established that minimal cones are hyperplanes in four dimensions, and De Silva and Jerison \cite{de2009singular} provided a counterexample for $n=7$. Kamburov in \cite{kamburov2013free} gave a counter-example to the free boundary analogue of De Giorgi's conjecture for $n=9$. Recently, Basulto and Kamburov \cite{basulto2024one} offered a complete classification of global solutions with finite Morse index in the one-dimensional case, and solutions with graphical free boundaries were studied in \cite{de2011gradient,engelstein2023graphical}.

Most notably, Chan, Fernandez-Real, Figalli, and Serra \cite{chan2025global} have made a significant breakthrough by classifying all stable global solutions to the one-phase free-boundary problem in dimension 3. This achievement has led to the resolution of the free boundary version of De Giorgi’s conjecture in dimension $4$.

Despite all these combined efforts and groundbreaking works, the higher regularity theory of interfaces remains somewhat incomplete. In 2006, Caffarelli and C\'ordoba \cite{caffarelli2006phase} demonstrated, that if the level surfaces are uniformly Lipschitz with respect to $\epsilon$, then the solution possesses a uniform $C^{1,\alpha}$ bound for some $\alpha\in(0,1)$, for both classical and free boundary version of Allen-Cahn equations. Subsequently, Wang-Wei \cite{wang2019second,wang2019finite} resolved the uniform $C^{2,\alpha}$ regularity problem for the classical Allen-Cahn equation, and sheet separation result for $n=2$. This was further extended to general Riemannian manifold setting and $n=3$, by Chodosh-Mantoulidis \cite{chodosh2020minimal}. However, to the best of the author’s knowledge, the extension of this result to the free-boundary Allen-Cahn equation remains an open question.

Precisely, we have the following theorem:
\begin{theorem}\label{thm: main theorem}
    Let $(M,\bar{g})$ be a $n+1$ dimensional smooth Riemannian manifold, $\epsilon\in(0,1]$, $x_0\in (M,\bar{g})$, and $u:B_1^{\bar{g}}(x_0)=:B_1\rightarrow \mathbb{R}$ a function satisfying
    \[
        \left\{
            \begin{alignedat}{2}
                \Delta_{\bar{g}}u&=0\quad&\textup{in}&\quad\{|u|<1\}\subset B_1\\
                |\nabla_{\bar g}u|&=1/\epsilon\quad &\textup{on}&\quad\partial\{|u|<1\}.
            \end{alignedat}
        \right.
    \]
    Assume $x_0\in \{u=0\}$, and that the level sets of $u$, $\{u=\tau\}$ for $\tau\in(-1,1)$ satisfy uniform $C^{1,1}$ regularity for each level surfaces, 
    \[
        \eta:=\max\{\|\mathbf{A}\|_{{L^\infty}(\{|u|<1\})},\|\textup{Ric}_{\bar{g}}\|_{L^\infty(\{|u|<1\})}^{1/2}\}<1/2,
    \]
    where $\mathbf{A}:\{|u|<1\}\rightarrow \mathbb{R}^{n\times n}$ is the second fundamental form of level surfaces of $u$, at a given point (that is to say, if $x\in \{u=\tau\}$, then $\mathbf{A}(x)$ is the second fundamental form of $\tau$-level surface of $u$, at $x$), and $\textup{Ric}_{\bar{g}}$ is Ricci curvature tensor of $(M,\bar{g})$. Moreover, denote $H$ the mean curvature of the level surfaces of $u$ in a similar manner.
    
    Then, for any $\alpha\in (0,1)$, there exists $C=C(n,\alpha,\bar{g})>0$, such that
    \begin{equation}\label{eq: theorem statement}
        \|\mathbf{A}\|_{C^\alpha(B_{1/2}\cap \{u=\tau\})}\leq C \eta,\quad\forall \tau\in(-1,1),
    \end{equation}
    and
    \begin{equation}\label{eq: improved mean curvature bound}
        \|H\|_{C^\alpha(B_{1/2}\cap\{|u|<1\})}\leq C\epsilon^{1-\alpha}\max\{\epsilon,\eta\}\eta.
    \end{equation}
    Furthermore, the second fundamental form and the mean curvature of the free boundary $\partial\{|u|<1\}$ enjoys the same estimates.
\end{theorem}

\begin{remark}
            The theorem provides an improved interior estimate for the mean curvature $H$ as a byproduct. In particular,
            \[
            \|H\|_{L^\infty(B_{1/2}\cap\{|u|<1\})}\lesssim_{n,\alpha,\bar{g}} \epsilon^{1-\alpha}\max\{\epsilon,\eta\}\eta\leq \eta,
            \]
            even though we did not assume any better initial estimates than $\eta$ for $H$. This shows that interfaces converges to the minimal surface in the limit of $\epsilon\rightarrow 0$, in a very strong sense. Moreover, note that $C^\alpha$ norm is in $B_{1/2}\cap\{|u|<1\}$. This means that there holds $C^\alpha$ estimate for $H$ in normal directions to the level surfaces as well, not only within each level surface.
\end{remark}

\begin{remark}
    These uniform estimates are derived from a more fundamental estimate:
    
    \[
        \|\nabla\phi\|_{C^{1,\alpha}(B_{1/2}\cap\{|u|<1\})}\leq C\epsilon^{1-\alpha}\max\{\epsilon,\eta\}\eta,
    \]
    
    where $\phi:=\log(1/|\nabla u|)$. From this estimate, using the identity $\partial_\nu\phi=H$, we obtain the desired results. This estimate is found to be useful in other problems, such as the parabolic Allen-Cahn equation.
\end{remark}

Once this $C^{1,1}$-to-$C^{2,\alpha}$ estimate is established, one can readily follow the compactness argument (see, e.g. \cite[Section 7]{wang2019finite}) that gives a uniform $C^{1,1}$ estimate, provided one has a uniform $C^{1,\alpha}$ bound. That is to say, with the uniform $C^{1,\alpha}$ result of Caffarelli-C\'ordoba \cite{caffarelli2006phase}, we conclude that the transition layers of the free boundary Allen-Cahn equation are uniformly $C^{2,\alpha}$ regular, if they are uniformly Lipschitz graphs.

The proof of the uniform $C^{2,\alpha}$ regularity of the free boundary Allen-Cahn equation is much more elementary than that of classical Allen-Cahn by Wang and Wei \cite{wang2019finite}, which involves many technical details, due to the presence of interactions between interfaces. 

Naturally, to prove such a regularity theorem with free boundaries, the first tool one may come up with would be the hodograph transform. However, the nature of the problem gives thin domain $\{|u|<1\}$ with its thickness comparable to $\epsilon$, and classical hodograph transform arguments that can be applied to Alt-Caffarelli free boundary problem cannot be applied to the free boundary Allen-Cahn equation, to obtain uniform estimates in $\epsilon$.

We present a novel framework to view interfaces as geometric flow, where the notion of time corresponds to the level of $u$. In this framework, we show that the surface normal velocity $\sigma:=1/|\nabla u|$ evolves by the mean curvature $H$ and $\Delta u$ (see Lemma \ref{lem: HCMF general})
\begin{equation}
	\frac{d}{d\tau}\sigma=\sigma^2(H-\sigma\Delta u),
\end{equation}
where the notion of time $\tau$ is the level of the ambient function $u$. Using this ODE and the apriori bound on the mean curvature $H$, one can approximate $\phi:=\log(1/|\nabla u|)=\log\sigma$ from its one dimensional profile,
\begin{equation}\label{eq: estimate from mean curvature bound}
    |\phi-\log\epsilon|\lesssim \epsilon\eta.
\end{equation}
This framework is found to be useful in parabolic settings as well, because it gives a direct pointwise relation between the parabolic Allen-Cahn equations and the mean curvature flow
\begin{equation}
    v=-H+\partial_\nu\phi,
\end{equation}
with an $\epsilon$-decaying bound on $\partial_\nu\phi$ (here $v$ is the surface normal velocity). However, we leave this for our future project and will not discuss in this paper.

Furthermore, an elliptic equation (see Lemma \ref{lem: key elliptic equation})
\begin{equation}\label{eq: elliptic equation by surface energy}
    \Delta\phi=H^2-|\mathbf{A}|^2
\end{equation}
is developed. This equation is in some sense an “intermediate equation" between the equation satisfied by $u$ and Simon's type equation satisfied by $\mathbf{A}$ (see Remark \ref{rmk: parabolic and simon's}), and provides a bridge between the $C^{1,1}$ regularity theory and higher regularity. From this elliptic equation and constructing explicit barrier on $\phi-\log\epsilon$, in the interior (away from the boundary of the cylinder), up to the free boundary, if a priori $C^{1,1}$ bounds $\eta>0$ of the level surfaces are given, we can improve \eqref{eq: estimate from mean curvature bound} to
\begin{equation}\label{eq: estimate from C11 bound}
    |\phi-\log\epsilon|\lesssim \epsilon^2\eta^2.
\end{equation}
Combining the elliptic equation \eqref{eq: elliptic equation by surface energy} and the improved estimate \eqref{eq: estimate from C11 bound}, we directly apply the elliptic boundary regularity estimate by Lian and Zhang \cite{lian2023boundary}, to obtain the uniform $C^{2,\alpha}$ regularity of the free boundaries. Together with a uniform $C^{2,\alpha}$ estimate of the free boundary and a standard global estimate up to the free boundary, we obtain a uniform $C^{2,\alpha}$ bound for all interfaces.

It is possible to study the classical Allen-Cahn equation or other nonlinear elliptic equations 
\[
    \Delta u=f(u)
\]
using the presented framework. However, the $\epsilon$-degenerate nonlinearity $f$ persists in the key equation (see \eqref{eq: key elliptic equation general} and c.f.\eqref{eq: elliptic equation by surface energy}), and this prevents us from directly applying the standard elliptic estimates to obtain $C^{2,\alpha}$ estimates uniform in $\epsilon$. If we can find a suitable approximation of $\sigma=1/|\nabla u|$, we will obtain a straightforward proof of a uniform $C^{2,\alpha}$ estimate for classical Allen-Cahn and other nonlinearity models. We leave this as our future project.

\subsection*{Notations}

We write here a list of symbols used throughout the paper. Note that we use the notation $\sigma$ interchangeably for $\sigma:\Gamma\times[-1,1]\rightarrow\mathbb{R}$ defined by $\frac{1}{|\nabla u\circ F|}$ and $\sigma:\{|u|<1\}\rightarrow\mathbb{R}$ defined by $\frac{1}{|\nabla u|}$, whenever there is no confusion. Same for $\phi=\log\sigma$.

\begin{center}
\begin{tabular}{p{1.5cm}p{11cm}}
$(M,\bar{g})$ & Ambient smooth Riemannian manifold\\
$\text{Ric}_{\bar{g}}$ & Ricci curvature tensor of the ambient manifold $(M,\bar{g})$\\
$u$ & Function as in Theorem \ref{thm: main theorem}\\
$\Gamma$ & $0$-level set of $u$\\
$B_r$ & $n$-dimensional geodesic ball with radius $r>0$ in $(M,\bar{g})$\\
$H$ & Mean curvature of level surfaces of $u$\\
$\mathbf{A}$ & Second fundamental form of level surfaces of $u$\\
$\mathbf{C}$ & $\nabla_{\bar{g}}^2u/|\nabla_{\bar{g}}u|$\\
$F$ & Immersion map of level surfaces of $u$\\
$\nu$ & Unit normal vector of level surfaces of $u$\\
$\sigma$ & $1/|\nabla_{\bar{g}} u\circ F|$ or $1/|\nabla_{\bar{g}} u|$\\
$\phi$ & $\log\sigma=\log(1/|\nabla_{\bar{g}} u\circ F|)$ or $\log(1/|\nabla_{\bar{g}} u|)$\\
$\partial_\nu v$ & $\nabla_{\bar{g}} v\cdot\nu$\\
$\partial_{\nu\nu}v$& $\text{Hess}_{\bar{g}}v(\nu,\nu)$
\end{tabular}
\end{center}

\section{Level Sets as a Geometric Flow}\label{sec: geometric flow}

In this section, we provide an overview of how the level sets of functions can be interpreted as a hyperbolic type geometric flow, using elliptic operators. This relation can have further implications, particularly in free boundary problems and overdetermined problems. Following the proof in \cite{lefloch2008hyperbolic}, the local-in-time well-posedness of this geometric flow can also be established for sufficiently regular initial conditions.

Let $(M,\bar{g})$ be $n+1$ dimensional Riemannian manifold with smooth metric $\bar{g}$. Let $v$ be a smooth function on open connected domain $\Omega\subset M$ without critical points, i.e., $\nabla v\neq 0$. Denote $\Gamma=\{v=0\}$ as the zero level surfaces of  $v$. 

Let $F:\Gamma\times\mathbb{R}\rightarrow\Omega\subset M$ be an immersion of $\tau$-level surface of $v$ in $\mathbb{R}^{n+1}$, defined in the following manner: for each $x\in \Gamma$, $F(x,\cdot):[-1,1]\rightarrow \Omega\subset M$ solves
    \begin{equation}\label{eq: definition of F}
        F(x,0)=x,\textrm{ and }\frac{dF}{d\tau}(x,\tau)=\frac{\nabla_{\bar{g}}v\circ F(x,\tau)}{|\nabla_{\bar{g}} v\circ F(x,\tau)|^2},\quad\forall \tau\in[-1,1],
    \end{equation}
where $\nabla_{\bar{g}} v$ is the gradient of $v$ under ambient metric ${\bar{g}}$. Then
    \[
    \frac{d}{d\tau}v\circ F(x,\tau)=\frac{\nabla_{\bar{g}} v\circ F(x,\tau)\cdot\nabla_{\bar{g}} v\circ F(x,\tau)}{|\nabla_{\bar{g}} v\circ F(x,\tau)|^2}=1,
    \]
and thus, $F$ preserves level surfaces along $\tau$:
\[
v\circ F(x,\tau)=\tau.
\]

Now, consider level surfaces $F(\Gamma,\tau)=\{v=\tau\}$ as a geometric flow in $(M,\bar{g})$, with the immersion metric $g(\tau)$. We denote $\Delta_\Gamma$ as the Laplace-Beltrami operator on the immersed manifold $(\Gamma,g)$ (note that the metric $g$ depends on $\tau$, and thus all related quantities as well). Additionally, we say $\sigma:=1/|\nabla_{\bar{g}} v\circ F|$ is the surface normal velocity, and $\nu:=\frac{\nabla_{\bar{g}} v}{|\nabla_{\bar{g}} v|}$ is the unit normal vector of level surfaces, respectively (increasing direction of $v$ is considered ``outward"). We will use the abbreviation $\mathbf{A}(x,\tau)=\mathbf{A}\circ F(x,\tau)$, $H(x,\tau)=H\circ F(x,\tau)$, and $\nu(x,\tau)=\nu\circ F(x,\tau)$. Note that we choose the sign of the mean curvature such that the mean curvature of sphere is positive (with the unit normal pointing outward). Also recall that the gradient of $v$ is always normal to its level surface, so $F$ is a normal flow, i.e., 
\[
\frac{d}{d\tau}F=\sigma\nu.
\]
Finally, $\mathbf{C}:=\frac{\text{Hess}_{\bar{g}}v}{|\nabla_{\bar{g}} v|}$ is ``extended" second fundamental form, in a sense that when restricted to its tangential components, it is precisely $\mathbf{A}$. That is to say, for any $x\in\Gamma$ and $\tau\in[-1,1]$,
\[
    \mathbf{C}(x,\tau)(\xi,\zeta)=\mathbf{A}(x,\tau)(\xi,\zeta),\quad\forall \xi,\zeta\in T_x\Gamma.
\]

Decomposition of the ambient Laplacian into Laplace-Beltrami operator of the submanifold and its normal derivatives is standard.

\begin{lemma}\label{lem: general Laplace-mean curvature relation}
    For any $f\in C^\infty(\Omega)$, 
    \begin{equation}\label{eq: decomposition of Laplace beltrami of level surface}
        (\Delta_{\bar{g}}f)\circ F=\Delta_\Gamma(f\circ F)+(\partial_{\nu\nu}f)\circ F+H(\partial_\nu f)\circ F,
    \end{equation}
    \begin{equation}\label{eq: timewise derivative relation}
        \frac{1}{\sigma}\frac{d}{d\tau}f\circ F=(\partial_{\nu} f)\circ F,
    \end{equation}
    and
    \begin{equation}
        \left(\frac{1}{\sigma}\frac{d}{d\tau}\right)^2 f\circ F=(\partial_{\nu\nu} f+\mathbf{C}(\nu,\nabla_{\bar{g}} f)-\mathbf{C}(\nu,\nu)\partial_\nu f)\circ F.
    \end{equation}
\end{lemma}
 \begin{proof}
     By applying the Laplace-Beltrami operator to $ f\circ F$, with chain-rule, we have
     \[
         \Delta_\Gamma( f\circ F)=\textrm{tr}_{\bar{g}}((\nabla_{\bar{g}} F)^T((\nabla_{\bar{g}}^2  f)\circ F)\nabla_{\bar{g}} F)+((\nabla_{\bar{g}} f)\circ F)\cdot \Delta_\Gamma F.
     \]
     Since $F(\cdot,\tau)$ is an immersion map of $\Gamma$, $\Delta_\Gamma F=-H\nu$. Moreover, it holds that
     \[
         \textrm{tr}_{\bar{g}}((\nabla_{\bar{g}} F)^T((\nabla_{\bar{g}}^2  f)\circ F)\nabla_{\bar{g}} F)+(\partial_{\nu\nu} f)\circ F=(\Delta_{\bar{g}} f)\circ F.
     \]
     This shows \eqref{eq: decomposition of Laplace beltrami of level surface}. \eqref{eq: timewise derivative relation} follows from \eqref{eq: definition of F}:
     \begin{align*}
         \frac{1}{\sigma}\frac{d}{d\tau} f\circ F&=\frac{\nabla_{\bar{g}}  f\circ F}{\sigma}\cdot \frac{dF}{d\tau}\\
         &=\frac{\nabla_{\bar{g}} f\circ F}{\sigma}\cdot \frac{\nabla_{\bar{g}}v\circ F}{|\nabla_{\bar{g}}v\circ F|^2}\\
         &=\nabla_{\bar{g}} f\circ F\cdot \nu\circ F\\
         &=\partial_\nu f\circ F.
     \end{align*}
     Finally, 
     \begin{align*}
         \left(\frac{1}{\sigma}\frac{d}{d\tau}\right)^2 f\circ F&=(\partial_\nu(\partial_\nu  f))\circ F\\
         &=(\partial_{\nu\nu} f+\partial_\nu\nu\cdot\nabla_{\bar{g}}  f)\circ F\\
         &=(\partial_{\nu\nu} f+\mathbf{C}(\nu,\nabla_{\bar{g}} f)-\mathbf{C}(\nu,\nu)\partial_\nu f)\circ F,
     \end{align*}
     where we have used
     \[
        \partial_\nu\nu=\mathbf{C}(\nu,\cdot)-\mathbf{C}(\nu,\nu)\nu.
     \]
     This completes the proof.
 \end{proof}

Then the level surface of $v$ follows the next evolution equations:

\begin{lemma}[Evolution equations]\label{lem: HCMF general}
     The level surface $F(\Gamma,\tau)=\{v=\tau\}$ satisfies a hyperbolic mean curvature type flow: 
    \begin{equation}\label{eq:HMCF general}
        \frac{d }{d\tau}\sigma=\sigma ^2(H-\sigma \Delta_{\bar{g}} v\circ F).
    \end{equation}
    Moreover,
    \begin{equation}\label{eq: evolution of mean curvature}
        \frac{d}{d\tau}H=-\Delta_\Gamma \sigma-\sigma(|\mathbf{A}|^2+\textup{Ric}_{\bar{g}}(\nu,\nu)),
    \end{equation} 
    where $\textup{Ric}_{\bar{g}}$ is the Ricci curvature tensor of the ambient manifold $(M,{\bar{g}})$.
\end{lemma}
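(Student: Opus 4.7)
The plan is to derive both evolution equations by direct differentiation along the flow $F$, using Lemma \ref{lem: general Laplace-mean curvature relation} for the first and the standard first-variation formulas for a normal geometric flow for the second. For the $\sigma$-equation, I would compute $\frac{d}{d\tau}(|\nabla_G v|^2 \circ F)$ in two different ways. On the one hand, since $\sigma = 1/|\nabla_G v\circ F|$, this derivative equals $-2\sigma^{-3}\frac{d\sigma}{d\tau}$. On the other hand, the chain rule together with $\frac{dF}{d\tau} = \sigma\nu$ and $\nabla_G v = (1/\sigma)\nu$ gives
$$\frac{d}{d\tau}(|\nabla_G v|^2 \circ F) = 2\nabla_G^2 v(\sigma\nu,\nabla_G v)\circ F = 2(\partial_{\nu\nu}v)\circ F.$$
Equating the two expressions yields $\frac{d\sigma}{d\tau} = -\sigma^3(\partial_{\nu\nu}v)\circ F$. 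To re-express this in terms of $(\Delta_G v)\circ F$ and $H$, I would invoke Lemma \ref{lem: general Laplace-mean curvature relation} with $\varphi = v$: because $v\circ F = \tau$ is constant on each $\Gamma_\tau$, the Laplace-Beltrami term drops out, and $\partial_\nu v = |\nabla_G v| = 1/\sigma$ on every level surface. Substituting the resulting relation between $(\partial_{\nu\nu}v)\circ F$, $(\Delta_G v)\circ F$, and $H/\sigma$ into the previous identity and simplifying yields \eqref{eq:HMCF general}.

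For the $H$-equation, note that $\frac{dF}{d\tau} = \sigma\nu$ exhibits $F$ as a normal geometric flow with speed $\sigma$, so the classical first-variation formulas apply. These give the evolution of the induced metric, $\frac{dg_{ij}}{d\tau} = 2\sigma h_{ij}$, and of the second fundamental form in the schematic form
$$\frac{dh_{ij}}{d\tau} = -\nabla_i\nabla_j\sigma + \sigma h_{ik}h^k_j + \sigma R^G_{i\nu j\nu},$$
where $R^G$ denotes the Riemann tensor of the ambient $(M,G)$ and $\nabla$ is the induced Levi-Civita connection on $\Gamma$. Tracing with $g^{ij}$ --- and accounting for the evolution of $g^{ij}$ itself via $\frac{dg^{ij}}{d\tau} = -2\sigma h^{ij}$ --- turns the three terms on the right into $\Delta_\Gamma\sigma$, $\sigma|h|^2$, and $\sigma\,\textup{Ric}_G(\nu,\nu)$ respectively, producing \eqref{eq: evolution of mean curvature}.

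The only nontrivial step is the derivation of the schematic evolution equation for $h_{ij}$. Two ingredients must be tracked carefully: the commutator of $\frac{d}{d\tau}$ with the ambient covariant derivatives, which produces the Riemann tensor of $(M,G)$, and the derivative of the unit normal $\nu$ along the flow, which is computed from the Weingarten relation. Neither computation poses any analytic difficulty; the delicate part is purely the bookkeeping of sign conventions, which must stay consistent with the choice $\Delta_\Gamma F = H\nu$ made in the proof of Lemma \ref{lem: general Laplace-mean curvature relation}. Once the evolution of $h_{ij}$ is in hand, tracing is routine and yields \eqref{eq: evolution of mean curvature} directly.
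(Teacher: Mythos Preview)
Your derivation of \eqref{eq:HMCF general} is essentially identical to the paper's: both compute $\frac{d\sigma}{d\tau}=-\sigma^{3}(\partial_{\nu\nu}v)\circ F$ by differentiating $|\nabla_G v|\circ F$ along the flow, and then eliminate $\partial_{\nu\nu}v$ via Lemma~\ref{lem: general Laplace-mean curvature relation} applied to $\varphi=v$ (using $\Delta_\Gamma(v\circ F)=0$ and $\partial_\nu v=1/\sigma$).

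For \eqref{eq: evolution of mean curvature} the paper does not argue at all: it simply cites the structure equation for a normal flow from \cite[Theorem~3.2]{bethuel1999geometric}, noting the opposite sign convention for $H$. Your outline --- evolve $g_{ij}$ and $h_{ij}$ under $\partial_\tau F=\sigma\nu$ and trace --- is exactly how that cited result is proved, so your approach is more self-contained but not genuinely different. One caution: with the specific schematic signs you wrote ($\partial_\tau g_{ij}=2\sigma h_{ij}$ and $\partial_\tau h_{ij}=-\nabla_i\nabla_j\sigma+\sigma h_{ik}h^{k}{}_{j}+\sigma R^G_{i\nu j\nu}$), tracing together with $\partial_\tau g^{ij}=-2\sigma h^{ij}$ yields $-\Delta_\Gamma\sigma-\sigma|h|^2+\sigma\,\mathrm{Ric}_G(\nu,\nu)$, not \eqref{eq: evolution of mean curvature}. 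This is precisely the convention mismatch you flag (the paper's choice $\Delta_\Gamma F=H\nu$ flips the sign of $H$ and hence of $h$ relative to the common mean-curvature-flow convention), so the argument goes through once you align signs --- but the claim that the three right-hand terms trace \emph{directly} to $\Delta_\Gamma\sigma$, $\sigma|h|^2$, $\sigma\,\mathrm{Ric}_G(\nu,\nu)$ is not literally true for the schematic as written.
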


\begin{proof}
    From \eqref{eq: decomposition of Laplace beltrami of level surface}, we have
    \begin{equation}
        0=\Delta_{\Gamma}v=\Delta_{\bar{g}} v+\partial_{\nu\nu}v+H\partial_{\nu}v.
    \end{equation}
    The reason $\Delta_{\Gamma}v=0$ is that $v$ is constant on $\Gamma$. Then we obtain the equation
    \begin{align*}
        \frac{d\sigma }{d\tau}(x,\tau)&=-\frac{((\text{Hess}_{\bar{g}}v)\circ F(x,\tau))(\nu,\nu)}{|\nabla_{\bar{g}} v\circ F(x,\tau)|^3}\\
        &=-\frac{\partial_{\nu\nu}v\circ F(x,\tau)}{(\partial_{\nu}v\circ F(x,\tau))^3}\\
        &=\frac{H(x,\tau)\partial_\nu v\circ F(x,\tau)-\Delta_{\bar{g}} v\circ F(x,\tau)}{(\partial_\nu v\circ F(x,\tau))^3}\\
        &=\sigma^2(x,\tau)\left(H(x,\tau)-\sigma (x,\tau)\Delta_{\bar{g}} v\circ F(x,\tau)\right).
    \end{align*}
    This concludes \eqref{eq:HMCF general}.

    \eqref{eq: evolution of mean curvature} is a structure equation for normal geometric flow, see \cite[Theorem 3.2]{bethuel1999geometric}. This completes the proof.
\end{proof}

\section{Key elliptic equation for \texorpdfstring{$\phi$}{}}

We derive the key elliptic equation satisfied by $\phi=\log\sigma$.

\begin{lemma}[Equation for $\phi$]\label{lem: key elliptic equation}
Suppose $v$ satisfies $\Delta_{\bar{g}}v= f(v)$ with differentiable nonlinearity $ f:\mathbb{R}\rightarrow\mathbb{R}$, and denote $\phi:=\log\sigma=\log(1/|\nabla v|)$. Then
    \begin{equation}\label{eq: key elliptic equation general}
        \Delta_{\bar{g}}\phi=(H-\sigma f)^2-|\mathbf{A}|^2-\text{Ric}_{\bar{g}}(\nu,\nu)- f'(v).
    \end{equation}
    In particular, if $\Delta_{\bar{g}}v=0$, then
    \begin{equation}\label{eq: key elliptic equation harmonic}
        \Delta_{\bar{g}}\phi=H^2-|\mathbf{A}|^2-\text{Ric}_{\bar{g}}(\nu,\nu).
    \end{equation}
\end{lemma}

\begin{remark}\label{rmk: parabolic and simon's}
    More generally, one can derive the parabolic version of this equation. That is to say, if $v$ solves $\partial_tv=\Delta_{\bar{g}} v- f(v)$, then $\phi=\log(1/|\nabla_{\bar{g}} v|)$ solves
    \begin{equation}
        \partial_t\phi=\Delta_{\bar{g}}\phi-\partial_\nu\phi(2\partial_\nu\phi-H+\sigma f)+|\mathbf{A}|^2+\text{Ric}_{\bar{g}}(\nu,\nu)+ f'(u).
    \end{equation}
    Moreover, Simon's type equation can be derived, with the similar argument. In $(M,\bar{g})=\mathbb{R}^{n+1}$, it has the form
    \begin{equation}\label{eq: Simon's type equation}
        \partial_t\mathbf{C}=\Delta \mathbf{C}+\mathbf{C}(|\mathbf{A}|^2+|\nabla_T\phi|^2)-2\nabla\phi\cdot \nabla \mathbf{C}-\sigma f''(u)\nu\otimes\nu,
    \end{equation}
    where $\nabla_T\phi$ is the tangential gradient of $\phi$. These can be used to study parabolic Allen-Cahn equation or higher regularity in phase transitions. However, we will not discuss this further in this paper.
\end{remark}

\begin{proof}
    First, note that 
    \[
        \nabla_{\bar{g}}\sigma=-\frac{\text{Hess}_{\bar{g}}v(\nu,\cdot)}{|\nabla_{\bar{g}}v|^2}=-\sigma \mathbf{C}(\nu,\cdot).
    \]
    Therefore, we obtain
    \begin{equation}\label{eq: key elliptic equation proof 1}
        \mathbf{C}(\nu,\nabla_{\bar{g}}\sigma)=-\frac{|\nabla_{\bar{g}}\sigma|^2}{\sigma}.
    \end{equation}
    Moreover, 
    \begin{equation}\label{eq: key elliptic equation proof 2}
        \mathbf{C}(\nu,\nu)=\frac{\text{Hess}_{\bar{g}}v(\nu,\nu)}{|\nabla_{\bar{g}}v|}=\frac{\Delta_{\bar{g}} v-\text{tr}_{\bar{g}}\mathbf{A}}{|\nabla_{\bar{g}} v|}=\sigma f(v)-H.
    \end{equation}
    Then, using Lemma \ref{lem: general Laplace-mean curvature relation},
    \begin{align*}
        &\Delta_{\bar{g}}\phi\\
        &=\frac{\Delta_{\bar{g}}\sigma}{\sigma}-\frac{|\nabla_{\bar{g}}\sigma|^2}{\sigma^2}\\
        &=\frac{\Delta_\Gamma\sigma+\partial_{\nu\nu}\sigma+H\partial_{\nu}\sigma}{\sigma}-\frac{|\nabla_{\bar{g}}\sigma|^2}{\sigma^2}\\
        &=\frac{\Delta_\Gamma\sigma}{\sigma}+\frac{1}{\sigma}\left(\frac{1}{\sigma}\frac{d}{d\tau}\right)^2\sigma-\frac{\mathbf{C}(\nu,\nabla_{\bar{g}}\sigma)}{\sigma}+\frac{\mathbf{C}(\nu,\nu)\partial_\nu\sigma}{\sigma}+H\frac{\partial_\nu\sigma}{\sigma}-\frac{|\nabla_{\bar{g}}\sigma|^2}{\sigma^2}\\
        &=\frac{\Delta_\Gamma\sigma}{\sigma}+\frac{1}{\sigma^2}\frac{d}{d\tau}\left(\sigma(H-\sigma f)\right)+\mathbf{C}(\nu,\nu)(H-\sigma f(v))+H(H-\sigma f(v))\\
        &=\frac{\Delta_\Gamma\sigma}{\sigma}+(H-\sigma f(v))^2+\frac{-\Delta_\Gamma \sigma-\sigma|\mathbf{A}|^2-\sigma\text{Ric}_{\bar{g}}(\nu,\nu)}{\sigma}-(H-\sigma f(v)) f(v)- f'(v)\\
        &\hspace{4mm}-(H-\sigma f(v))^2+H(H-\sigma f(v))\\
        &=(H-\sigma f(v))^2-|\mathbf{A}|^2-\text{Ric}_{\bar{g}}(\nu,\nu)- f'(v),
    \end{align*}
    where we have used \eqref{eq:HMCF general}, \eqref{eq: evolution of mean curvature}, \eqref{eq: key elliptic equation proof 1}, \eqref{eq: key elliptic equation proof 2}. This completes the proof.
\end{proof}

\section{Proof of Theorem \ref{thm: main theorem}}\label{sec: improved gradient estimate}
In this section, we denote $u$ as in Theorem \ref{thm: main theorem}. Moreover, we denote $F$, $\sigma=1/|\nabla u|$, and $\phi=\log(1/|\nabla u|)$ as in Section \ref{sec: geometric flow}. First, we estimate $\sigma-\epsilon$, which directly follows from the estimate on the mean curvature.
\begin{lemma}
    Let $u$ be as in Theorem \ref{thm: main theorem}. Then we have
    \begin{equation}\label{eq: naive gradient estimate in sigma}
        |\sigma-\epsilon|\leq \epsilon^2\eta,\quad\textup{in}\quad\{|u|<1\},
    \end{equation}
    or in other words,
    \begin{equation}\label{eq: naive gradient estimate}
       |\phi-\log\epsilon|\leq \epsilon\eta,\quad\textup{in}\quad\{|u|<1\}.
    \end{equation}
\end{lemma}

\begin{proof}
    Since $u$ is harmonic, from \eqref{eq:HMCF general}, we have
    \begin{equation}
        \frac{d}{d\tau}\sigma=\sigma^2H.
    \end{equation}
    Therefore the conclusion follows immediately from solving this ODE with the initial condition (the free boundary gradient condition) $\sigma(\cdot,\pm 1)=\epsilon$ and $C^{1,1}$ bound $|H|\leq \eta$.
\end{proof}

\begin{remark}
	Note that to get this a priori estimate, we do not require $C^{1,1}$ bounds on the level surfaces. The only estimate we need is the bound on the mean curvature $\epsilon\|H\|_{L_\tau^\infty(-1,1)}<1$.
\end{remark}

Because we also have $C^{1,1}$ bound (not only the mean curvature bound) of level surfaces, we can improve the interior gradient estimate by a barrier argument. This improvement was made possible by the observation that $\phi=\log(1/|\nabla u|)$ solves a Poisson equation 
\[
    \Delta\phi=H^2-|\mathbf{A}|^2
\]    
with the square norm of the traceless second fundamental form on the right hand side. Additionally, from the free boundary condition, $\phi$ satisfies the Dirichlet boundary condition $\phi-\log\epsilon=0$ on the free boundary. Therefore, we have reduced the overdetermined problem on $u$ to Dirichlet boundary value problem for $\phi$. 

\begin{lemma}\label{lem: interior gradient estimate}
    Let $u$ be as in Theorem \ref{thm: main theorem}. If we denote $\phi=\log(1/|\nabla u|)$, then we have
    \begin{equation}\label{eq: interior gradient estimate}
        |\phi-\log\epsilon|\leq C\epsilon^2\max\{\epsilon,\eta\}\eta,\quad\textrm{in}\quad B_{3/4}\cap\{|u|<1\},
    \end{equation}
    for some $C=C(n,\bar{g})>0$.
\end{lemma}

\begin{proof}
    Recall that $x_0$ (the center of the geodesic ball $B_1$) is on $\Gamma=\{u=0\}$. Combine \eqref{eq: key elliptic equation harmonic} with the free boundary condition and \eqref{eq: naive gradient estimate}, $\phi-\log\epsilon$ solves the following elliptic problem:
    \begin{equation}\label{eq: elliptic equation satisfied by phi}
    \left\{
    \begin{alignedat}{3}
        \Delta_{\bar{g}} (\phi-\log\epsilon)&=H^2-|\mathbf{A}|^2-\text{Ric}_{\bar{g}}(\nu,\nu)&\quad
        &\textrm{in}&&\quad\{|u|<1\},\\
        \phi-\log\epsilon&=0&\quad&\textrm{on}&&\quad\partial\{|u|<1\},\\
        \left|\phi-\log\epsilon\right|&\leq  \epsilon\eta&\quad&\textrm{on}&&\quad\partial B_1\cap\overline{\{|u|<1\}}.
    \end{alignedat}
    \right.
    \end{equation}
    
    We define the supersolution by
    \[
        \Phi(x):=C_\tau\epsilon^2(1-u^2(x))+C_xd_{\bar{g}}^2(x_0,x),
    \]
    where $d_{\bar{g}}$ is the geodesic distance, and $C_\tau,C_x>0$ are constants to be chosen later. From the harmonicity of $u$, we can compute the Laplacian of $\Phi$: 
    \[
        \Delta_{\bar{g}} u^2=2|\nabla_{\bar{g}} u|^2,
    \]
    thus 
    \[
        \Delta_{\bar{g}}\Phi\leq 2nC_x(1+C_{\bar{g}})-2C_\tau\epsilon^2|\nabla_{\bar{g}} u|^2,
    \]
    with $C_{\bar{g}}$ is a constant only depending on the ambient metric $\bar{g}$. Now, let $C_\tau=2n(4\max\{\epsilon,\eta\}\eta(1+C_{\bar{g}})+\eta^2)$ and $C_x=4\max\{\epsilon,\eta\}\eta$. With these choices, we find that
    \[
        \Delta_{\bar{g}} \Phi<  - 2n\eta^2\leq \Delta_{\bar{g}} (\phi-\log\epsilon),
    \]
    because of $C^{1,1}$ and the ambient curvature bounds $|H^2-|\mathbf{A}|^2-\text{Ric}_{\bar{g}}(\nu,\nu)|\leq 2n\eta^2$. Moreover, it satisfies the boundary condition on the free boundary
    \[
        \Phi=C_xd_{\bar{g}}^2(x_0,\cdot)\geq 0,\quad\text{on}\quad\partial\{|u|<1\},
    \]
    and also on the boundary of the ball
    \[
        \Phi\geq C_x\geq 4\epsilon\eta\geq \phi-\log\epsilon,\quad\textrm{on}\quad \partial B_1\cap\overline{\{|u|<1\}}.
    \]

    Therefore, we deduce that $\Phi$ is a supersolution to $\phi-\log\epsilon$. The construction of a subsolution follows similarly, and by applying the maximum principle, we conclude that
    \[
    \left|\phi(x_0)-\log\epsilon\right|= C_\tau\epsilon^2(1-u^2(x_0))\lesssim_{n,\bar{g}} \epsilon^2\max\{\epsilon,\eta\}\eta.
    \]

    By repeating the similar argument for all other points in $B_{3/4}\cap\{|u|<1\}$, we arrive at the conclusion.
\end{proof}

Using the improved estimate and elliptic equation \eqref{eq: elliptic equation satisfied by phi}, we can first provide the uniform $C^{2,\alpha}$ estimate of the free boundary.

\begin{lemma}\label{eq: C2alpha estimate of the free boundary}
    Let $H$ and $\mathbf{A}$ as in Theorem \ref{thm: main theorem}. There exists $C=C(n,\alpha,\bar{g})>0$ such that
    \[
        \|H\|_{C^\alpha(B_{1/2}\cap\partial\{|u|<1\})}\leq C\epsilon^{1-\alpha}\eta^2,
    \]
    and
    \begin{equation}\label{eq: free boundary c2alpha estimate}
        \|\mathbf{A}\|_{C^\alpha(B_{1/2}\cap\partial\{|u|<1\})}\leq C\eta.
    \end{equation}
\end{lemma}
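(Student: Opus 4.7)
The plan is to convert the sought $C^\alpha$ estimate on the free-boundary mean curvature $H_{\pm 1}$ into a boundary Schauder estimate for the Poisson problem \eqref{eq: elliptic equation satisfied by sigma} satisfied by $\sigma-\epsilon$, and then upgrade this to the full second fundamental form via the quasi-linear mean curvature equation for the free boundary as a graph. The starting point is the pointwise identity $\partial_\nu \sigma = -\sigma H$, which follows immediately from \eqref{eq:HMCF general} (using $\Delta u = 0$) and $\partial_\nu = \sigma^{-1}\, d/d\tau$ from \eqref{eq: timewise derivative relation}. Restricting to the free boundary, where $\sigma = \epsilon$, gives $H_{\pm 1} = -\epsilon^{-1}\partial_\nu\sigma|_{\{u=\pm 1\}}$, so the first estimate reduces to a $C^\alpha$ bound on $\nabla\sigma$ at the free boundary.

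By \eqref{eq: elliptic equation satisfied by sigma} and Lemma \ref{lem: interior gradient estimate}, $\sigma-\epsilon$ solves a Poisson equation $\Delta(\sigma-\epsilon) = f$ with Dirichlet datum zero on the $C^{1,1}$ free boundary, $\|\sigma-\epsilon\|_{L^\infty(\mathcal{C}_{3/4})} \lesssim \epsilon^3\eta^2$, and $\|f\|_{L^\infty} \lesssim \epsilon\eta^2$. Since the band $\{|u|<1\}$ has thickness only $\sim \epsilon$, I rescale by $\epsilon$: setting $y = (x-x_0)/\epsilon$ about a free-boundary point $x_0$ and $\tilde v(y) = \sigma(x_0 + \epsilon y) - \epsilon$ yields a problem on a domain of unit thickness, whose rescaled boundary has $C^{1,1}$ norm $\lesssim \epsilon\eta$ (hence nearly flat) and with $\|\tilde v\|_{L^\infty}$ and $\|\tilde\Delta\tilde v\|_{L^\infty}$ both $\lesssim \epsilon^3\eta^2$. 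The Lian--Zhang boundary $C^{1,\alpha}$ estimate \cite{lian2023boundary} then gives $\|\tilde v\|_{C^{1,\alpha}} \lesssim \epsilon^3\eta^2$; undoing the scaling yields $[\nabla\sigma]_{C^\alpha} \lesssim \epsilon^{2-\alpha}\eta^2$ on the free boundary, and combined with the previous step this proves $\|H_{\pm 1}\|_{C^\alpha(B_{1/2})} \lesssim \epsilon^{1-\alpha}\eta^2$.

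For the second estimate, locally parametrize the free boundary as a graph $x_{n+1} = g_\pm(x')$ with $\|g_\pm\|_{C^{1,1}} \lesssim \eta$. Then $g_\pm$ satisfies the scalar quasi-linear mean curvature equation
\[
\mathrm{div}\!\left(\frac{\nabla g_\pm}{\sqrt{1+|\nabla g_\pm|^2}}\right) = H_{\pm 1},
\]
whose coefficients are smooth functions of $\nabla g_\pm \in C^{0,1} \subset C^\alpha$ and are uniformly elliptic. Schauder estimates (with a preliminary shift by the tangent plane at the base point to control the $L^\infty$ norm) then give $[D^2 g_\pm]_{C^\alpha(B_{1/2})} \lesssim \|D^2 g_\pm\|_{L^\infty(B_1)} + [H_{\pm 1}]_{C^\alpha(B_1)} \lesssim \eta + \epsilon^{1-\alpha}\eta^2 \lesssim \eta$, which translates to $\|h_{\pm 1}\|_{C^\alpha(B_{1/2})} \lesssim \eta$. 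The delicate point in the argument is really the scaling in the middle step: the $\epsilon$-thin slab geometry means a naive application of boundary regularity loses the $\epsilon$-dependence, and rescaling to unit thickness is essential to recover the sharp $\epsilon^{1-\alpha}$ factor in the mean curvature bound.
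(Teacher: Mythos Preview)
Your argument is correct and follows the same route as the paper: reduce the mean-curvature estimate to a boundary $C^{1,\alpha}$ estimate for $\sigma-\epsilon$ via the identity $H_{\pm1}=-\epsilon^{-1}\partial_\nu\sigma$, invoke the Lian--Zhang pointwise boundary estimate for the Poisson problem \eqref{eq: elliptic equation satisfied by sigma}, and then pass from $H_{\pm1}$ to $h_{\pm1}$ by interior Schauder for the graph mean-curvature equation. Your explicit $\epsilon$-rescaling to a unit-thickness slab is exactly what is needed (and is only implicit in the paper) to see why the Lian--Zhang constant stays uniform in $\epsilon$ and produces the sharp $\epsilon^{2-\alpha}\eta^2$ bound on $[\nabla\sigma]_{C^\alpha}$.
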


\begin{proof}
    Recall that $\phi=\log\sigma=\log(1/|\nabla u|)$ satisfies \eqref{eq: elliptic equation satisfied by phi}. Also recall \eqref{eq: interior gradient estimate}. Applying pointwise $C^{1,\alpha}$ boundary estimate (e.g. see \cite[Theorem 1.15]{lian2023boundary}) to $\phi-\log\epsilon$, we obtain
    \[
       \|\nabla_{\bar{g}} \phi\|_{C^{\alpha}(B_{3/4}\cap \partial\{|u|<1\})}\leq C(n,\alpha,\bar{g}) \epsilon^{1-\alpha}\max\{\epsilon,\eta\}\eta.
    \]
    Then from \eqref{eq:HMCF general} and \eqref{eq: timewise derivative relation} we derive
    \[
        H=\partial_\nu \phi.
    \]
    Therefore, we have $C^\alpha$ estimate of the mean curvature of the free boundary:
    \[
        \|H\|_{C^\alpha(B_{3/4}\cap\partial\{|u|<1\})}\leq C(n,\alpha,\bar{g})\epsilon^{1-\alpha}\max\{\epsilon,\eta\}\eta.
    \]
    By the standard interior estimate from $H$, we have
    \[
        \|\mathbf{A}\|_{C^\alpha(B_{1/2}\cap\partial\{|u|<1\})}\leq C(n,\alpha,\bar{g})\eta.
    \]
    This concludes the proof.
\end{proof}

Finally, we complete the proof by using standard elliptic estimate up to the free boundary.

\begin{proof}[Proof of Theorem \ref{thm: main theorem}]
    Now, with the uniform $C^{2,\alpha}$ bound on the free boundaries (Lemma \ref{eq: C2alpha estimate of the free boundary}), from the standard elliptic estimates (see, e.g., \cite[(4.46)]{gilbarg2015elliptic}), we obtain the uniform $C^{1,\alpha}$ estimate on $\phi$
    \[
        \|\nabla_{\bar{g}} \phi\|_{C^\alpha(B_{3/4}\cap \{|u|=\tau\})}\leq C(n,\alpha,\bar{g})\epsilon^{1-\alpha}\max\{\epsilon,\eta\}\eta,\quad\forall \tau\in (-1,1),
    \]
    from the elliptic equation
    \[
        \left\{
        \begin{alignedat}{2}
            \Delta (\phi-\log\epsilon)&=H^2- |\mathbf{A}|^2-\text{Ric}_{\bar{g}}(\nu,\nu)\quad
        &\textrm{in}&\quad\{|u|<1\},\\
        \phi-\log\epsilon&=0\quad&\textrm{on}&\quad\partial\{|u|=\tau\}.
        \end{alignedat}
        \right.
    \]
    Then the claim of the theorem follows as in the proof of Lemma \ref{eq: C2alpha estimate of the free boundary}, using $H=\partial_\nu \phi$.
\end{proof}

\section*{Acknowledgements}

The author would like to thank Prof. Joaquim Serra for his invaluable guidance and mentorship during the course of this research, which originated as part of the author’s Master’s thesis project. The author is also deeply grateful to Dr. Xavier Fernandez-Real for his encouragement and insightful advice, which motivated the completion of this work. Profound gratitude is reserved for Dr. Hardy Chan, whose invaluable advice and proofreading significantly enriched the quality of this work. This research was supported by the Swiss National Science Foundation (SNF grant number: PZ00P2\_202012).

\bibliographystyle{abbrvnat}
\bibliography{Bib.bib}

\medskip

\end{document}